\documentclass[12pt,oneside,reqno]{amsart}%
\makeatletter
\usepackage{amsfonts}
\usepackage{amsmath}
\usepackage{amssymb}
\usepackage{graphicx}
\usepackage{booktabs}
\usepackage{subcaption}
\usepackage{natbib}
\usepackage{epstopdf} 
\usepackage{version}
\usepackage{bm}
\usepackage{enumerate}
\usepackage{epsfig}
\usepackage{caption}
\usepackage{xcolor}
\usepackage{setspace}
\usepackage{amsthm}
\usepackage{caption}
\DeclareCaptionLabelFormat{cont}
{#1~#2\alph{ContinuedFloat}}
\usepackage{footnote}
\theoremstyle{plain}
\newtheorem{theorem}{Theorem}[section]

\newtheorem*{notation*}{Notation}
\numberwithin{equation}{section}

\newcommand{\ds}{\displaystyle}
\excludeversion{comment1}

\begin{document}

\title{The Levin Method for the Summation of One-dimensional and Multidimensional Infinite Series
}
\author{David Levin \\School of Mathematical Sciences, Tel-Aviv University, Tel-Aviv, Israel \\ levindd@gmail.com}


\begin{abstract}
The Levin method transforms the evaluation of a highly oscillatory integral into the solution of a first-order linear ODE for a slowly varying auxiliary function. This ODE is typically approximated by collocation, after which the integral value is recovered from the auxiliary function at the endpoints.

The present work develops a new extension of the Levin method for the summation of one-dimensional and multidimensional infinite oscillatory series. The summation problem is transformed into the solution of a functional equation involving transformed arguments of the unknown function. For linear phases, existence and uniqueness of a distinguished smooth solution are established under suitable regularity and end conditions. The resulting approach is particularly attractive in the multidimensional setting, where the range of existing numerical methods is relatively limited.

\end{abstract}

\keywords{Infinite series; Levin-type method; Multidimensional; rational approximation.}
\maketitle

\vfill\eject

\section{Introduction}

In 1980, a PhD student asked for my advice on the efficient computation of six-dimensional oscillatory integrals. I presented him with a simple idea, which later was published in \citep{Levin1982}. The method has been further developed and analyzed in \citep{Levin1996} and in \citep{Levin1997}. Further analysis, extensions, and developments of these methods can be found in the works \citep{EvansWebster1997}, \citep{IserlesNorsett2005}, \citep{Olver2007},  \citep{HuybrechsOlver2009}, among others. Some authors refer to this approach as the Levin method; here, I will refer to it as the collocation method.

Prior to this work, together with Avram Sidi, we developed the \(d\)- and \(D\)-transformations for accelerating the convergence of infinite series and for the evaluation of infinite integrals \citep{LevinSidi1981}. These transformations were subsequently analyzed and further developed in \citep{Sidi2003}. A two-dimensional extension of the \(d\)- and \(D\)-transformations was later considered in \citep{GreifLevin1998}.

In \citep{LevinSidi1981}, the transformations are derived through an asymptotic expansion analysis of the remainders, first for infinite integrals and then for infinite series. In \citep{Levin1982}, the collocation method was originally developed for finite, highly oscillatory integrals. The same work also showed how the approach could be extended to the evaluation of infinite oscillatory integrals. The natural next step—extending the collocation approach to infinite oscillatory series—appears to have remained unexplored since then. In the present work, I show how the collocation idea introduced in \citep{Levin1982} can be adapted to the summation of one-dimensional and multidimensional infinite series.

\section{The collocation method}

\subsection{The collocation method for finite oscillatory integrals}\hfill

\medskip
Consider the oscillatory integral
\[
I=\int_a^b f(x)e^{\ds iq(x)}\,dx,
\]
where \(f\) is assumed to be smooth and slowly varying, whereas the phase
function \(q\) may induce rapid and nonuniform oscillations.

The collocation approach introduced in \citep{Levin1982} is based on the
observation that if a function \(p\) satisfies
\begin{equation}
p'(x)+iq'(x)p(x)=f(x),
\label{eq:LevinODE}
\end{equation}
then
\begin{equation}
\frac{d}{dx}\left(p(x)e^{\ds iq(x)}\right)
=
f(x)e^{\ds iq(x)},
\label{ddxp}
\end{equation}
and therefore
\begin{equation}
I
=
p(b)e^{\ds iq(b)}-p(a)e^{\ds iq(a)}.
\label{eq:LevinIntegral}
\end{equation}

The general solution of \eqref{eq:LevinODE} contains an oscillatory
homogeneous component. However, when \(f\) and \(q'\) vary slowly relative
to the oscillations of \(e^{\ds iq(x)}\), there exists a particular solution
that is itself slowly varying (see \citep{Levin1997}). The basic idea is, therefore, to approximate
such a slowly varying solution directly.

Let
\[
p_n(x)=\sum_{k=1}^n a_k u_k(x),
\]
where \(\{u_k\}_{k=1}^n\) is a prescribed family of linearly independent,
slowly varying basis functions. The coefficients \(a_k\) are determined by
imposing \eqref{eq:LevinODE} at collocation points
\[
a=x_1<x_2<\cdots<x_n=b.
\]
Thus,
\[
p_n'(x_j)+iq'(x_j)p_n(x_j)=f(x_j),
\qquad j=1,\ldots,n,
\]
or, equivalently,
\begin{equation}
\sum_{k=1}^n
a_k
\left[
u_k'(x_j)+iq'(x_j)u_k(x_j)
\right]
=
f(x_j),
\qquad j=1,\ldots,n.
\label{eq:LevinCollocation}
\end{equation}

After solving the linear system \eqref{eq:LevinCollocation}, the integral
is approximated by
\begin{equation}
I_n
=
p_n(b)e^{\ds iq(b)}-p_n(a)e^{\ds iq(a)}.
\label{eq:LevinApproximation}
\end{equation}

Thus, the direct numerical treatment of the highly oscillatory integrand
is replaced by the approximation of a slowly varying solution of the
first-order differential equation \eqref{eq:LevinODE}.

\subsection{Application to an infinite oscillatory integral}\hfill

\medskip
In \citep{Levin1982}, the applicability of the collocation idea to infinite
oscillatory integrals is demonstrated by considering the example
\begin{equation}
I
=
\int_0^\infty
\frac{e^{\ds ix}}{(x+1)^2}\,dx.
\label{eq:InfiniteExample}
\end{equation}

We transfer the problem to the finite interval case, using the transformation
\begin{equation}
x=\frac{t}{1-t},
\qquad 0\leq t<1,
\label{xtot}
\end{equation}
we have
\[
dx=\frac{dt}{(1-t)^2},
\qquad
x+1=\frac{1}{1-t},
\]
and hence
\[
\frac{dx}{(x+1)^2}=dt.
\]
The integral \eqref{eq:InfiniteExample} is therefore transformed into
\begin{equation}
I
=
\int_0^1
\exp\left(i\frac{t}{1-t}\right)\,dt.
\label{eq:TransformedInfiniteIntegral}
\end{equation}

In the notation of the finite-interval formulation,
\[
f(t)=1,
\qquad
q(t)=\frac{t}{1-t},
\]
and consequently
\[
q'(t)=\frac{1}{(1-t)^2}.
\]
The corresponding differential equation for the slowly varying function
\(p\) is
\begin{equation}
p'(t)
+
\frac{i}{(1-t)^2}p(t)
=
1.
\label{eq:InfiniteODE}
\end{equation}

Since the coefficient of \(p(t)\) becomes singular as \(t\to1\), the
equation is multiplied by \((1-t)^2\), yielding
\begin{equation}
(1-t)^2p'(t)+ip(t)=(1-t)^2.
\label{eq:RegularizedInfiniteODE}
\end{equation}

A polynomial approximation
\[
p_n(t)=\sum_{k=1}^n a_k t^{k-1}
\]
is then introduced, and the coefficients are determined by collocation:
\begin{equation}
(1-t_j)^2p_n'(t_j)+ip_n(t_j)
=
(1-t_j)^2,
\qquad j=1,\ldots,n.
\label{eq:InfiniteCollocation}
\end{equation}

Once the coefficients have been obtained, the approximation to the
integral is
\[
I_n
=
\lim_{\ds t\to 1^{-}}p_n(t)e^{\ds iq(t)}
-
p_n(0)e^{\ds iq(0)}.
\]

In the transformed formulation, the phase
$q(t)=t/(1-t)$
tends to infinity as \(t\to 1^{-}\). However, by equation \eqref{eq:RegularizedInfiniteODE}, \(p_n(1)=p(1)=0\). Consequently, the contribution from the endpoint \(t=1\) vanishes, and the approximation to the infinite integral reduces to

\[
I_n
=
-
p_n(0)e^{\ds iq(0)}.
\]

This example demonstrates that an infinite oscillatory integral can first be transformed to a finite interval, despite the infinitely rapid oscillations that arise near the endpoint \(t=1\). The resulting singularity in the derivative of the transformed phase is incorporated into the auxiliary differential equation and regularized by multiplication by a suitable vanishing factor. The regularized equation can then be treated by the same collocation principle used for finite-interval oscillatory integrals.

The numerical results reported in \citep{Levin1982} demonstrate the rapid
convergence of this procedure to the value of the original infinite
integral.

We remark that the integral in \eqref{eq:InfiniteExample} can also be efficiently approximated by the \(D\)-transformation introduced in \citep{LevinSidi1981}, and even by the earlier \(u\)-transformation of \citet{Levin1972}. The main advantage of the collocation approach becomes more pronounced, however, in the computation of multidimensional infinite oscillatory integrals.

\section{The collocation method for infinite series}\label{Sec3}

Consider an infinite series of the form
\begin{equation}
S=\sum_{n=0}^\infty f(n)e^{\ds iq(n)},
\label{sumfeq}
\end{equation}
where $f$ and $q$ are non-oscillatory real-valued functions on $\mathbb{R}_+$, with the phase function satisfying $q(n)\to\infty$ as $n\to\infty$, and $f(n)\to 0$ sufficiently rapidly to ensure the absolute convergence of the series.

Assuming that the series is absolutely convergent, we define the sequence \(\{p(n)\}_{n=0}^\infty\) by
\begin{equation}
p(n)e^{iq(n)}=
-\sum_{k=n}^{\infty} f(k)e^{iq(k)},
\qquad n\in\mathbb{N}_0,
\label{Tail}
\end{equation}
where
$$
\mathbb{N}_0=\{0,1,2,3,\ldots\}.
$$

This implies a relation replacing
the ansatz \eqref{ddxp}:
\begin{equation}
\Delta\left[p(n)e^{\ds iq(n)}\right]=f(n)e^{\ds iq(n)},\ \ \ \ n\in\mathbb{N}_0,
\label{Deltap}
\end{equation}
where $\Delta$ denotes the simple forward difference operator, which plays here the role of the derivative operator in \eqref{ddxp}.
That is, we look for a sequence $\{p(n)\}_{n=0}^\infty$ satisfying the relation \eqref{Deltap}.
Analogously to \eqref{eq:LevinIntegral}, we obtain
\begin{equation}
S=\lim_{n\to\infty}p(n)e^{\ds iq(n)}-p(0)e^{\ds iq(0)}.
\label{Svalue0}
\end{equation}
However, since the series is absolutely convergent, it follows from \eqref{Tail} that $\lim_{n\to\infty}p(n)e^{\ds iq(n)}=0$. Consequently,
\begin{equation}
S=-p(0)e^{\ds iq(0)}.
\label{Svalue}
\end{equation}

Expanding \eqref{Deltap} we obtain
\[
p(n+1)e^{\ds iq(n+1)}-p(n)e^{\ds iq(n)}=f(n)e^{\ds iq(n)},
\]
that leads to a difference equation that is the discrete analog of the ordinary differential equation \eqref{eq:LevinODE},
\begin{equation}
p(n+1)-p(n)+p(n+1)\frac{e^{\ds iq(n+1)}-e^{\ds iq(n)}}{e^{\ds iq(n)}}=f(n), \ \ \ n\in\mathbb{N}_0.
\label{Diffeq}
\end{equation}

In perfect analogy to the case of infinite integrals, we use a change of variable here
\begin{equation}
n=\frac{t_n}{1-t_n},\ \ \ t_n=\frac{n}{n+1}.
\label{ntotn}
\end{equation}

The infinite sum takes the form 

\begin{equation}
\sum_{n=0}^\infty f(\frac{t_n}{1-t_n})\exp(iq(\frac{t_n}{1-t_n})).
\label{sumfeqtn}
\end{equation}

The difference equation \eqref{Diffeq} takes the form
\begin{equation}
p(\frac{t_{n+1}}{1-t_{n+1}})r(t_n)-p(\frac{t_{n}}{1-t_{n}})=f(\frac{t_n}{1-t_n}),
\label{Diffeq2}
\end{equation}
where
\begin{equation}
r(t_n)=\exp(iq(\frac{t_{n+1}}{1-t_{n+1}}))\bigg/\exp(iq(\frac{t_n}{1-t_n})),
\label{rtn}
\end{equation}
noticing that $t_{n+1}=1/(2-t_n)$. This follows from
\begin{equation}
t_{n+1}=\frac{n+1}{n+2}=\left(\frac{t_n}{1-t_n}+1\right)\bigg/\left(\frac{t_n}{1-t_n}+2\right)=\frac{1}{2-t_n}.
\label{tnp1}
\end{equation}

The preceding developments lead to a key conceptual step in our construction. Rather than seeking the infinite sequence \(\{p(n)\}_{\ds n\in\mathbb{N}_0}\) directly, we embed it into a function \(u(t)\) defined on the finite interval \([0,1]\), requiring

\begin{equation}
p(n)=u(t_n)\equiv u(\frac{n}{n+1}), \qquad n\in\mathbb{N}_0.
\label{utop}
\end{equation}

Thus, the original problem of determining infinitely many discrete values is reformulated as the problem of approximating a single function on a finite interval.

Equation \eqref{Diffeq2} implies the relation

\begin{equation}
 u(t_{n+1})r(t_n)-u(t_n)=f\left(\frac{t_n}{1-t_n}\right).
\end{equation}
Using \eqref{tnp1}, this can be rewritten as
\begin{equation}
 u(\frac{1}{2-t_n})r(t_n)-u(t_n)=f(\frac{t_n}{1-t_n}).
 \label{Diffeq3}
\end{equation}
The next nontrivial step is to extend the relation \eqref{Diffeq3} from the discrete set of points $\{t_n\}$ to \([0,1)\), with the equation at \(t=1\) interpreted by continuity whenever the relevant limits exist.
The resulting functional equation for $u$ is
\begin{equation}
 u(\frac{1}{2-t})r(t)-u(t)=f(\frac{t}{1-t}),\ \ \ t\in [0,1),
 \label{Diffeq4}
\end{equation}
where, by \eqref{rtn},
$$r(t)=exp(iq(\frac{1}{1-t})-iq(\frac{t}{1-t})),\ \ \ t\in [0,1).$$
In particular, the quantity
$
q\left(\ds \frac{1}{1-t}\right)-q\left(\ds\frac{t}{1-t}\right)
$
has a finite limit at $t=1$ whenever
$
\lim_{x\to\infty}\bigl(q(x+1)-q(x)\bigr)
$
exists and is finite. Moreover, since \(|e^{\ds iq(n)}|=1\), equation \eqref{Tail} implies that \(p(n)\to0\) as \(n\to\infty\). Therefore, in view of \eqref{utop}, any continuous extension \(u\) satisfies
$$
u(1)=0.
$$

We note that 
$p(0)=u(0)$.
Consequently, by \eqref{Svalue},
\begin{equation}
S=-u(0)e^{\ds iq(0)}.
\label{Svalue2}
\end{equation}

\subsection{Smooth non-oscillatory solution: existence and uniqueness}\hfill

In the original method for computing highly oscillatory integrals \citep{Levin1982}, a key observation underlying the collocation strategy was the existence of a nonoscillatory solution to the ODE \eqref{eq:LevinODE}. A rigorous justification of this observation was later provided in \citep{Levin1997}.

As with the ODE \eqref{eq:LevinODE}, equation \eqref{Diffeq4} admits infinitely many solutions, which, in the present case, are generally non-smooth. However, the following theorem shows that, under mild conditions, when \(r(t)\equiv e^{i\theta}\) with \(\theta\notin 2\pi\mathbb Z\), the functional equation \eqref{Diffeq4} admits a unique smooth solution, which we regard as the distinguished non-oscillatory solution. We then approximate this distinguished solution by collocation.

\begin{theorem}
\label{ThmSmoothSolution}
Let $z=e^{i\theta},\ \ z\ne 1$ and $g\in C^\infty([0,1]), \ \ g(1)=0.$

Then the functional equation
\begin{equation}
z\,u\left(\frac{1}{2-t}\right)-u(t)=g(t),
\qquad 0\leq t<1,
\label{ConstFuncEq}
\end{equation}
has a unique solution \(u\in C^\infty([0,1])\). 
Moreover, $u(1)=0$
and
\begin{equation}
u(t)
=
-\sum_{n=0}^{\infty} z^n g(T^n(t)),
\qquad 0\leq t<1,
\label{SmoothSolutionSeries}
\end{equation}
where 
\[
T(t)=\frac{1}{2-t}.
\]

\end{theorem}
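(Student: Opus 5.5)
The plan is to pass to the variable $s=1-t$, in which $T$ becomes $s\mapsto s/(1+s)$, a parabolic map with fixed point $s=0$. Its iterates are explicit:
\[
1-T^n(t)=\frac{1-t}{1+n(1-t)}\le \min\Bigl(1-t,\ \frac1n\Bigr).
\]
In the original variable $x=t/(1-t)$, $T$ is simply the shift $x\mapsto x+1$. The proof then splits into three parts: (i) uniqueness, which also gives the series representation for every continuous solution; (ii) convergence of the series \eqref{SmoothSolutionSeries}; (iii) existence of a $C^\infty$ solution, which is the real content.

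\emph{Uniqueness and the series.} Let $u$ be any solution continuous on $[0,1]$. Letting $t\to1$ in \eqref{ConstFuncEq} gives $(z-1)u(1)=g(1)=0$, so $u(1)=0$ because $z\ne1$. Iterating the equation yields
\[
u(t)=-\sum_{n=0}^{N-1}z^n g(T^n t)+z^N u(T^N t).
\]
Since $T^N t\to1$ and $|z^N|=1$, the last term tends to $u(1)=0$. Hence every continuous solution equals the series \eqref{SmoothSolutionSeries}, which proves uniqueness. For the difference $w$ of two solutions this reads $w=z^N w\circ T^N\to 0$. The series itself converges uniformly by Dirichlet's test, even though $g(T^n t)=O(1/n)$ is not summable. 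The partial sums of $z^n$ are bounded by $2/|1-z|$. The sequence $g(T^n t)$ tends to $0$ and has total variation at most $\|g'\|_\infty\sum_n (T^{n+1}t-T^n t)\le \|g'\|_\infty(1-t)$. So the series defines a continuous solution, and it only remains to show that this solution is smooth.

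\emph{Existence of a smooth solution.} Differentiating the series termwise fails near $t=1$: the $m$-th derivative of $T^n$ grows like $n^{m-1}$ there, so the naive bound diverges. I would therefore first remove the Taylor part of $g$ at $s=0$.

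First, solve the equation formally in powers of $s$. Write $g\sim\sum_{k\ge1}g_k s^k$ and seek $U\sim\sum_{k\ge1}a_k s^k$. Since $T(s)=s/(1+s)=s-s^2+\dots$, the coefficient of $s^k$ in $zU(T(s))-U(s)$ is $(z-1)a_k$ plus a combination of $a_1,\dots,a_{k-1}$. Because $z\ne1$, the $a_k$ are determined recursively.

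Second, by Borel's lemma, choose $U\in C^\infty([0,1])$ with this Taylor series at $t=1$. Then the residual $\rho=g-(zU\circ T-U)$ is smooth and flat at $t=1$, and $U(1)=0$.

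Third, look for $u=U+v$. Then $v$ must solve $z\,v\circ T-v=\rho$, and its candidate is $v=-\sum_n z^n\rho\circ T^n$. The $m$-th derivative of $\rho\circ T^n$ is, by Faà di Bruno, a sum of products of $\rho^{(j)}(T^n t)$ with derivatives of $T^n$. Those derivatives are rational in $s$ and bounded by $C_m n^{m}$ uniformly on $[0,1]$. Since $1-T^n t\le 1/n$ and $\rho$ is flat, $|\rho^{(j)}(T^n t)|\le C_{j,M}n^{-M}$ for every $M$. Hence every differentiated series converges uniformly, and $v\in C^\infty([0,1])$ with $v(1)=0$.

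Finally, $u=U+v$ is a $C^\infty$ solution with $u(1)=0$. By the uniqueness part it coincides with \eqref{SmoothSolutionSeries}.

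\emph{Main obstacle.} The delicate step is the uniform polynomial bound on $\frac{d^m}{dt^m}T^n(t)$ and the fact that flatness of $\rho$ beats it. I would verify the bound directly from the formula
\[
T^n(t)=1-\frac{s}{1+ns},\qquad \frac{d^m}{ds^m}\,\frac{s}{1+ns}=\pm\,\frac{m!\,n^{m-1}}{(1+ns)^{m+1}}\quad (m\ge1),
\]
which is bounded by $m!\,n^{m-1}$. The other point needing care is the triangular structure of the formal recursion. This follows because $T(s)^j=s^j(1+s)^{-j}$ begins at order $s^j$.
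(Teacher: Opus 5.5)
Your proof is correct. Its skeleton matches the paper's: the paper also conjugates $T$ to the shift $x\mapsto x+1$ via $x=1/(1-t)$. It gets convergence of $-\sum z^n g(T^n t)$ by summation by parts. It determines the expansion $v(x)=c_1/x+c_2/x^2+\cdots$ (your $\sum a_k s^k$, since $1/x=s$) by the same triangular recursion with pivot $z-1$. And it proves uniqueness by iterating $w=z^n\,w\circ T^n$ and using $(z-1)w(1)=0$.

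Where you genuinely diverge is the smoothness at $t=1$. The paper only asserts that ``standard summation-by-parts estimates for the remainder'' show the actual series solution has this expansion to all orders, together with its derivatives. That estimate is never carried out. It would also require converting $x$-derivatives into $t$-derivatives, since $d/dt=x^2\,d/dx$.

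You instead realize the formal solution by a Borel function $U$, which makes the residual $\rho$ flat at $t=1$. The series $-\sum z^n\rho\circ T^n$ then converges absolutely with all its derivatives. This follows from Fa\`a di Bruno, the explicit bound $|(T^n)^{(m)}|\le m!\,n^{m-1}$, and $|\rho^{(j)}(T^n t)|\le C_{j,M}n^{-M}$. In effect you replace delicate remainder estimates on a conditionally convergent series by a Weierstrass M-test. Your argument is therefore self-contained where the paper's is a sketch.

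It also recovers the paper's asymptotic claim: every derivative of $v$ at $t=1$ carries a factor $\rho^{(j)}(1)=0$, so the Taylor series of $u$ at $t=1$ is exactly the formal series. A minor point: your Dirichlet-test paragraph is logically redundant. The smooth solution $U+v$, combined with your uniqueness argument that every continuous solution equals the partial-sum limit, already gives convergence of the series to $u$.
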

\begin{proof}
Introduce
\[
x=\frac{1}{1-t},\qquad
v(x)=u\left(1-\frac1x\right),\qquad
h(x)=g\left(1-\frac1x\right).
\]
Since
\[
T(t)=1-\frac{1}{x+1},
\]
equation \eqref{ConstFuncEq} becomes
\begin{equation}
zv(x+1)-v(x)=h(x),\qquad x\geq1.
\label{DifferenceEq}
\end{equation}

Because \(g(1)=0\) and \(g\) is smooth,
\[
h(x)=O(x^{-1}),\qquad h'(x)=O(x^{-2}).
\]
Also, since \(z\neq1\) and \(|z|=1\), the partial sums of \(z^n\)
are uniformly bounded. 
Hence, by summation by parts, the series
\[
\sum_{n=0}^\infty z^n h(x+n)
\]
converges. Therefore,
\[
v(x)=-\sum_{n=0}^\infty z^n h(x+n)
\]
is well defined and satisfies \eqref{DifferenceEq}. Returning to the
variable \(t\) gives \eqref{SmoothSolutionSeries}.

It remains to verify smoothness at \(t=1\). Since \(g\in C^\infty\)
and \(g(1)=0\), \(h\) has, for every \(N\), an asymptotic expansion
\[
h(x)
=
\frac{a_1}{x}
+\frac{a_2}{x^2}
+\cdots+
\frac{a_N}{x^N}
+O(x^{-N-1}).
\]
Substituting an expansion
\[
v(x)
=
\frac{c_1}{x}
+\frac{c_2}{x^2}
+\cdots
\]
into \eqref{DifferenceEq} determines the coefficients \(c_j\)
successively. At each step the coefficient of the new unknown
\(c_j\) is \(z-1\neq0\). Standard summation-by-parts estimates for
the remainder then show that this asymptotic expansion holds to
arbitrary order, together with its derivatives. Since
$x^{-1}=1-t$,
it follows that \(u\) extends smoothly to \(t=1\). Moreover, the
expansion begins with a multiple of \(1-t\), and hence $u(1)=0$.

Finally, suppose \(u_1\) and \(u_2\) are two continuous solutions and
set \(w=u_1-u_2\). Then
\[
z\,w(T(t))=w(t),
\]
and iteration gives
\[
w(t)=z^n w(T^n(t)).
\]
Since \(|z|=1\) and \(T^n(t)\to1\),
\[
|w(t)|=|w(1)|.
\]
Taking \(t\to1^-\) in the homogeneous equation gives
\[
(z-1)w(1)=0.
\]
Since \(z\neq1\), \(w(1)=0\), and therefore \(w\equiv0\).
Thus, the smooth solution is unique.
\end{proof}

\subsection{Collocation solution of the functional equation}\hfill

Let
\begin{equation}
u_n(x)=\sum_{k=1}^n a_k \phi_k(x),
\label{uneq}
\end{equation}
where \(\{\phi_k\}_{k=1}^n\) is a prescribed family of linearly independent,
slowly varying basis functions. Assuming that the equation extends continuously to \(t=1\), the coefficients \(a_k\) are determined by
imposing \eqref{Diffeq4} at collocation points
\[
0=x_1<x_2<\cdots<x_n=1.
\]
Thus,
\begin{equation}
 u_n(\frac{1}{2-x_j})r(x_j)-u_n(x_j)=f(\frac{x_j}{1-x_j}), \qquad j=1,\ldots,n.
 \label{DiffeqCol}
\end{equation}

\subsection{An example}
\hfill

\medskip
Consider the infinite series
\begin{equation}
S=\sum_{n=0}^\infty (n+1)^{\ds -\alpha}e^{\ds in\theta},
\label{Sexample}
\end{equation}
where \(\theta\in\mathbb{R}\) and \(\alpha>1\), the latter condition ensuring absolute convergence.

Let us derive the explicit form of the functional equation for \(u\) in this case. Since
\[
f(\frac{t}{1-t})
=
(\frac{t}{1-t}+1)^{\ds -\alpha}
=
(1-t)^{\ds \alpha},
\]
and, by \eqref{rtn}, with \(q(n)=n\theta\),
\[
r(t)
=
\frac{e^{\ds i(n+1)\theta}}{e^{\ds in\theta}}
\equiv
e^{\ds i\theta},
\]
the functional equation for \(u\) becomes
\begin{equation}
u(\frac{1}{2-t})e^{\ds i\theta}-u(t)
=
(1-t)^{\ds \alpha},
\qquad t\in[0,1].
\label{Diffeq5}
\end{equation}

For the numerical illustration, we set \(\theta=1\) and \(\alpha=2\). The corresponding value of the series, accurate to ten decimal digits, is
$$S=\sum_0^\infty \frac{e^{\ds in}}{(n+1)^2}\approx 1.0283495580+0.2750919539\, i.$$

For the collocation approximation, we employ the monomial basis

$$
\phi_k(x)=x^{k-1}, \qquad k=1,\ldots,n,
$$

together with the equidistant collocation points

$$
x_k=\frac{k-1}{n-1}, \qquad k=1,\ldots,n.
$$

By \eqref{Svalue2}, the resulting \(n\)-th order approximation to the series is

$$
S_n=-u_n(0).
$$
The results below demonstrate the high accuracy of the method. The digits that agree with the reference value are highlighted in bold.

\[
\begin{aligned}
S_{10} &= \mathbf{1.02835}51961 + \mathbf{0.27509}86725\,i,\\
S_{15} &= \mathbf{1.028349}7063 + \mathbf{0.275091}8377\,i,\\
S_{20} &= \mathbf{1.02834955}12 + \mathbf{0.27509195}28\,i,\\
S_{25} &= \mathbf{1.028349558}9 + \mathbf{0.2750919539}\,i.
\end{aligned}
\]

We remark that the condition number of the associated linear systems grows rapidly with \(n\). Nevertheless, this ill-conditioning does not appear to significantly affect the accuracy of the quantity \(u_n(0)\), which is the only value required for approximating the infinite sum.

\subsection{Rational approximations}
\label{Rational}
\hfill

\medskip
Consider the application of the above collocation procedure to the power series
\begin{equation}
S(z)=\sum_{n=0}^\infty f(n)z^n.
\label{sumfz}
\end{equation}

The resulting functional equation for this series takes the form
\begin{equation}
u(\frac{1}{2-t})z-u(t)
=
f(\frac{t}{1-t}),
\qquad t\in[0,1).
\label{Diffeq6}
\end{equation}
with the endpoint included only when the relevant limit exists.

Applying collocation to this equation with a chosen set of basis functions leads to a linear system for the coefficients of the approximate solution

$$
u_n(x)=\sum_{k=1}^n a_k\phi_k(x).
$$

\begin{equation}
 u_n(\frac{1}{2-x_j})z -u_n(x_j)=f(\frac{x_j}{1-x_j}), \qquad j=1,\ldots,n.
 \label{DiffeqColz}
\end{equation}
where $
\{x_j\}_{j=1}^n$ are the chosen collocation points. 

Each row of the resulting \(n\times n\) system matrix depends linearly on \(z\). Therefore, whenever \(A(z)\) is nonsingular, each entry of \(A(z)^{-1}\) is a rational function of \(z\):
\[
\left(A(z)^{-1}\right)_{ij}
=
\frac{p_{ij}(z)}{D(z)},
\]
where $D(z)$
is a polynomial of degree at most \(n\), and \(p_{ij}(z)\) is a polynomial of degree at most \(n-1\). It follows that the resulting approximation
$S_n(z)=-u_n(0)$ 
is a rational function of $z$.

The rational approximation \(S_n(z)\) depends on the choice of basis functions \(\{\phi_k\}\) and collocation points \(\{x_j\}\). A detailed analysis of these approximations lies beyond the scope of the present paper; nevertheless, preliminary numerical experiments reveal several interesting properties.

\section{The multidimensional case}

As a preliminary to the discussion of infinite multiple series, we first recall the application of the collocation method to finite \(d\)-dimensional oscillatory integrals.

\subsection{The collocation method for finite multidimensional oscillatory integrals}
\label{sec:multidimensional-Levin}\hfill

We consider oscillatory integrals over the unit cube
\[
    I=\int_{[0,1]^d} f(\mathbf{x})e^{\ds iq(\mathbf{x})}\,d\mathbf{x},
    \qquad
    \mathbf{x}=(x_1,\ldots,x_d),
\]
where \(f\) is assumed to be slowly varying in comparison with the
oscillatory factor \(e^{\ds iq}\).

We denote
\[
    q_j(\mathbf{x})
    =
    \frac{\partial q}{\partial x_j}(\mathbf{x}),
    \qquad j=1,\ldots,d,
\]
and
\[
    D_j=\frac{\partial}{\partial x_j}.
\]

Define the differential operator
\begin{equation}
\label{Ld-def}
    L^{(d)}p
    =
    e^{-iq(\mathbf{x})}
    D_1D_2\cdots D_d
    \left[p(\mathbf{x})e^{\ds iq(\mathbf{x})}\right].
\end{equation}

Equivalently,
\begin{equation}
\label{Ld-product}
    L^{(d)}
    =
    (D_1+iq_1)(D_2+iq_2)\cdots(D_d+iq_d),
\end{equation}
where the right-hand side denotes the  composition of differential
operators.

For example, when \(d=2\),
\[
\begin{aligned}
L^{(2)}p
&=(D_1+iq_1)(D_2+iq_2)p \\
&=p_{12}
  +iq_2p_1
  +iq_1p_2
  +(iq_{12}-q_1q_2)p.
\end{aligned}
\]

We seek a solution of
\begin{equation}
\label{d-dimensional-PDE}
    L^{(d)}p(\mathbf{x})=f(\mathbf{x}),
    \qquad \mathbf{x}\in[0,1]^d.
\end{equation}

It follows from \eqref{Ld-def} that
\[
    D_1D_2\cdots D_d
    \left[p(\mathbf{x})e^{\ds iq(\mathbf{x})}\right]
    =
    f(\mathbf{x})e^{\ds iq(\mathbf{x})}.
\]

Integrating successively with respect to
\(x_1,\ldots,x_d\), we obtain
\begin{equation}
\label{vertex-formula}
\boxed{
    I
    =
    \sum_{\boldsymbol{\epsilon}\in\{0,1\}^d}
    (-1)^{d-|\boldsymbol{\epsilon}|}
    p(\boldsymbol{\epsilon})
    e^{\ds iq(\boldsymbol{\epsilon})}
    },
\end{equation}
where
\[
    \boldsymbol{\epsilon}
    =
    (\epsilon_1,\ldots,\epsilon_d),
    \qquad
    |\boldsymbol{\epsilon}|
    =
    \epsilon_1+\cdots+\epsilon_d.
\]

We approximate \(p\) by
\begin{equation}
\label{pd-approximation}
    p_N(\mathbf{x})
    =
    \sum_{k=1}^{N} a_k u_k(\mathbf{x}),
\end{equation}
where \(u_1,\ldots,u_N\) are slowly varying basis functions.

Let
\[
    \mathbf{x}^{(1)},\ldots,\mathbf{x}^{(N)}
    \in[0,1]^d
\]
be collocation points. The coefficients are determined from
\begin{equation}
\label{d-collocation}
    L^{(d)}p_N\left(\mathbf{x}^{(j)}\right)
    =
    f\left(\mathbf{x}^{(j)}\right),
    \qquad j=1,\ldots,N.
\end{equation}

Equivalently,
\begin{equation}
\label{d-collocation-expanded}
    \sum_{k=1}^{N}
    a_k
    L^{(d)}u_k\left(\mathbf{x}^{(j)}\right)
    =
    f\left(\mathbf{x}^{(j)}\right),
    \qquad j=1,\ldots,N.
\end{equation}

After solving this linear system, the oscillatory integral is
approximated by
\begin{equation}
\label{d-Levin-quadrature}
\boxed{
    I_N
    =
    \sum_{\boldsymbol{\epsilon}\in\{0,1\}^d}
    (-1)^{d-|\boldsymbol{\epsilon}|}
    p_N(\boldsymbol{\epsilon})
    e^{\ds iq(\boldsymbol{\epsilon})}
    }.
\end{equation}

A natural choice is a tensor-product polynomial approximation
\begin{equation}
\label{tensor-polynomial}
    p_n(\mathbf{x})
    =
    \sum_{\alpha_1=0}^{n}
    \cdots
    \sum_{\alpha_d=0}^{n}
    a_{\boldsymbol{\alpha}}
    x_1^{\alpha_1}\cdots x_d^{\alpha_d},
\end{equation}
where
\[
    \boldsymbol{\alpha}
    =
    (\alpha_1,\ldots,\alpha_d).
\]

The number of unknown coefficients is
\[
    N=(n+1)^d.
\]

For example, using equidistant tensor-product collocation points,
\[
    x_j=\frac{j}{n},
    \qquad j=0,\ldots,n,
\]
the collocation set is
\[
    \left\{
    \left(
    \frac{j_1}{n},\ldots,\frac{j_d}{n}
    \right):
    0\le j_1,\ldots,j_d\le n
    \right\}.
\]

\subsection{Infinite double series}\hfill

Consider an infinite double series of the form
\begin{equation}
S=\sum_{n_1,n_2=0}^\infty f(n_1,n_2)e^{\ds iq(n_1,n_2)},
\label{sum2feq}
\end{equation}
where $f$ and $q$ are non-oscillatory functions on $\mathbb{R}_+^2$, with the
phase function satisfying \(q(x_1,x_2)\to\infty\) as \(x_1+x_2\to\infty\), and \(f(x_1,x_2)\to 0\) as \(x_1+x_2\to\infty\), sufficiently rapidly to ensure the absolute convergence of the series.

Here we define a double sequence $\{p(n_1,n_2)\}$ such that
\begin{equation}
p(n_1,n_2)e^{\ds iq(n_1,n_2)}=
\sum_{k_1=n_1}^{\infty}
\sum_{k_2=n_2}^{\infty} f(k_1,k_2)e^{\ds iq(k_1,k_2)},
\qquad n_1,n_2\in\mathbb{N}_0.
\label{Tail2}
\end{equation}

This implies the relation 
\begin{equation}
\Delta_2\Delta_1\left[p(n_1,n_2)e^{\ds iq(n_1,n_2)}\right]=f(n_1,n_2)e^{\ds iq(n_1,n_2)},\ \ \ \ n_1,n_2\in\mathbb{N}_0,
\label{Del12tap}
\end{equation}
where $\Delta_j$ is the forward difference operating on the $j$th index.
Summing up the infinite double sum
$$ S=\sum_{n1,n_2=0}^\infty \Delta_2\Delta_1\left[p(n_1,n_2)e^{\ds iq(n_1,n_2)}\right]$$
we obtain
\begin{equation}
S=c_{\infty,\infty}-c_{\infty,0}-c_{0,\infty}+p(0,0)e^{\ds iq(0,0)}
\end{equation}
where
$$
c_{\infty,\infty}=\lim_{n_1,n_2\to\infty}p(n_1,n_2)e^{\ds iq(n_1,n_2)},
\qquad
c_{\infty.0}=\lim_{n_1\to\infty}p(n_1,0)e^{\ds iq(n_1,0)},
\qquad
c_{0,\infty}=\lim_{n_2\to\infty}p(0,n_2)e^{\ds iq(0,n_2)}.
$$
Absolute convergence of the double series directly implies that all three terms at infinity vanish, and we are left with 
\begin{equation}
S=p(0,0)e^{\ds iq(0,0)}
\label{S2value}
\end{equation}

Dividing \eqref{Del12tap} by $e^{\ds iq(n_1,n_2)}$ we derive the following relation
\begin{equation}
\Delta_2\Delta_1\left[p(n_1,n_2)e^{\ds iq(n_1,n_2)}\right]e^{\ds -iq(n_1,n_2)}=f(n_1,n_2).
\label{Expand2compact}
\end{equation}

As in the univariate case considered in Section~\ref{Sec3}, we introduce the change of variables

$$
(n_1,n_2)\mapsto \bigl(s_{n_1},t_{n_2}\bigr),
$$

which maps the infinite lattice \(\mathbb{N}_0^2\) onto an infinite lattice contained in \([0,1]^2\).

\begin{equation}
n_1=\frac{s_{n_1}}{1-s_{n_1}},\ \ \ s_{n_1}=\frac{n_1}{n_1+1},\ \ \
n_2=\frac{t_{n_2}}{1-t_{n_2}},\ \ \ t_{n_2}=\frac{n_2}{n_2+1}.
\label{n2totn}
\end{equation}

Rather than seeking the infinite sequence \(\{p(n_1,n_2)\}_{n_1,n_2\in\mathbb{N}_0}\), we look for a function \(u(s,t)\) defined on \([0,1]^2\), requiring

$$
p(n_1,n_2)=u(s_{n_1},t_{n_2})\equiv u(\frac{n_1}{n_1+1},\frac{n_2}{n_2+1}), \qquad n_1,n_2\in\mathbb{N}_0.
$$

Let us also denote
$$
Q(x_1,x_2)\equiv 
e^{\ds iq(x_1,x_2)}.
$$

Equation \eqref{Expand2compact} can be rewritten as

\begin{equation}
\Delta_2\Delta_1\left[p(n_1,n_2)e^{\ds iq(n_1,n_2)}\right]\bigg/Q(\frac{s_{n_1}}{1-s_{n_1}},\frac{t_{n_2}}{1-t_{n_2}})=f(\frac{s_{n_1}}{1-s_{n_1}},\frac{t_{n_2}}{1-t_{n_2}}).
\label{u2compact}
\end{equation}

Using the relations

$$
p(n_1,n_2)=u(s_{n_1},t_{n_2}),\ \ 
s_{n_1+1}=\frac{1}{2-s_{n_1}},\ \ 
t_{n_2+1}=\frac{1}{2-t_{n_2}},\ \ n_1+1=\frac{1}{1-s_{n_1}},\ \ n_2+1=\frac{1}{1-t_{n_2}},
$$

both sides of \eqref{u2compact} can be expressed solely in terms of \(s_{n_1}\) and \(t_{n_2}\). 

The next step is to replace $s_{n_1}$ and $t_{n_2}$ by the continuous variables $s$ and $t$.
The resulting functional equation for $u(s,t)$, $(s,t)\in [0,1)^2$, is

\begin{equation}
u(\frac{1}{2-s},\frac{1}{2-t})\frac{Q(\ds \frac{1}{1-s},\frac{1}{1-t})}{Q(\ds \frac{s}{1-s},\frac{t}{1-t})}
-u(\frac{1}{2-s},t)\frac{Q(\ds \frac{1}{1-s},\frac{t}{1-t})}{Q(\ds \frac{s}{1-s},\frac{t}{1-t})}
-u(s,\frac{1}{2-t})\frac{Q(\ds \frac{s}{1-s},\frac{1}{1-t})}{Q(\ds \frac{s}{1-s},\frac{t}{1-t})}
+u(s,t)
=f(\frac{s}{1-s},\frac{t}{1-t}).
\label{Func2eq}
\end{equation}

Observe that, as $s\to1$, we have $1/(2-s)\to 1$ and $s/(1-s)\to\infty$, with analogous relations holding for $t$. Let $u(s,t)$ be a solution of \eqref{Func2eq} that extends continuously to \([0,1]^2\). 

By 
\eqref{Tail2} and absolute convergence,
$$ p(n_1,0)\to 0,\qquad p(0,n_2)\to 0,\qquad p(n_1,n_2)\to 0 $$
as the corresponding indices tend to infinity. Hence, assuming a continuous extension \(u\), it follows that
$$
u(1,1)=u(1,0)=u(0,1)=0.
$$
Moreover, $u(0,0)=p(0,0)$. Hence, by \eqref{S2value},
\begin{equation}
 S=u(0,0)e^{\ds iq(0,0)}.
 \label{S2value2}
\end{equation}
For general phase functions \(q\), the existence of a sufficiently smooth solution of \eqref{Func2eq} remains an open question.

For the important special case of a linear phase
\[
q(n_1,n_2)=n_1\theta_1+n_2\theta_2,
\qquad
e^{\ds i\theta_1}\neq1,\quad e^{\ds i\theta_2}\neq1,
\]
the existence of a smooth solution follows directly from
Theorem~\ref{ThmSmoothSolution}. 

Denoting
\[
g(s,t)=f\left(\frac{s}{1-s},\frac{t}{1-t}\right),
\]
and defining
\[
(\mathcal L_ju)(s,t)
=
e^{\ds i\theta_j}u(T(s),t)-u(s,t),
\]
\[
(\mathcal L_2u)(s,t)
=
e^{\ds i\theta_2}u(s,T(t))-u(s,t),
\]
equation \eqref{Func2eq} takes the factorized form
\[
\mathcal L_1\mathcal L_2u=g.
\]
One first applies Theorem~\ref{ThmSmoothSolution} in the \(s\)-variable,
treating \(t\) as a parameter. The resulting solution also vanishes
on \(t=1\), by uniqueness applied to the homogeneous
equation on that face. One may then apply Theorem \ref{ThmSmoothSolution} in the
\(t\)-variable.
Hence, if \(g\in C^\infty([0,1]^2)\) and
\[
g(1,t)=g(s,1)=0,
\]
successive application of Theorem~\ref{ThmSmoothSolution}
shows that there exists a unique smooth solution
\(u\in C^\infty([0,1]^2)\), satisfying
\[
u(1,t)=u(s,1)=0.
\]

\subsection{A two-dimensional
example}\label{2Dnumerical}\hfill

Consider an infinite double series of the form
\begin{equation}
S=\sum_{n_1,n_2=0}^\infty (n_1+n_2+1)^{\ds -\alpha}e^{\ds i(n_1\theta_1+n_2\theta_2)}.
\label{sum2Example}
\end{equation}

It follows that 
$$f(s/(1-s),t/(1-t))=\frac{(1-s)^{\ds \alpha}(1-t)^{\ds \alpha}}{(1-st)^{\ds \alpha}},
$$
so that the functional equation \eqref{Func2eq} becomes
\begin{equation}
u(\frac{1}{2-s},\frac{1}{2-t})e^{\ds i(\theta_1+\theta_2)}
-u(\frac{1}{2-s},t)e^{\ds i\theta_1}
-u(s,\frac{1}{2-t})e^{\ds i\theta_2}
+u(s,t)
=f(\frac{s}{1-s},\frac{t}{1-t}).
\end{equation}
As a numerical test, we consider the case \(\theta_1=\theta_2=1\) and \(\alpha=2\). We note that this series is only conditionally convergent. The double series is understood in the sense of rectangular partial sums. We emphasize that this example lies outside the absolute-convergence assumptions made above and is not covered by the preceding smooth-existence
result.
We include it to illustrate the method's performance in the conditionally convergent case.
The series is

\begin{equation}
S=
\sum_{\ds n_1,n_2=0}^{\ds\infty}
\frac{e^{\ds i(n_1+n_2)}}
     {(n_1+n_2+1)^2}.
\label{2Dexample}
\end{equation}
Here
\[
q(n_1,n_2)=n_1+n_2,
\qquad
f(n_1,n_2)=(n_1+n_2+1)^{-2}.
\]
Introducing
\[
n_1=\frac{s}{1-s},
\qquad
n_2=\frac{t}{1-t},
\]
we obtain
\[
n_1+n_2+1
=
\frac{1-st}{(1-s)(1-t)}.
\]
Consequently, the transformed right-hand side is
\begin{equation}
g(s,t)
=
\left[
\frac{(1-s)(1-t)}
     {1-st}
\right]^2.
\label{2Drhs}
\end{equation}
The function $g$ is defined to be zero on the faces $s=1$ and $t=1$,
by continuity.

Let
\[
T(x)=\frac{1}{2-x},
\qquad
z=e^{\ds i}.
\]
For the linear phase considered here, the corresponding functional
equation takes the form
\begin{equation}
z^2 u(T(s),T(t))
-z u(T(s),t)
-z u(s,T(t))
+u(s,t)
=
g(s,t).
\label{2Dfunctional}
\end{equation}
Equivalently, defining
\[
(L_1u)(s,t)=z\,u(T(s),t)-u(s,t),
\]
and
\[
(L_2u)(s,t)=z\,u(s,T(t))-u(s,t),
\]
equation \eqref{2Dfunctional} can be written in the factorized form
\[
L_1L_2u=g.
\]
Since $d=2$ and $q(0,0)=0$, the required series value is
\begin{equation}
S=u(0,0).
\label{2DSvalue}
\end{equation}

For the numerical computations, we employ the Tal--Ezer mapped Chebyshev basis (see \cite{Tal-Ezer2014}), associated with the Kosloff--Tal--Ezer mapping introduced in \cite{KTE}. Specifically, we use

$$
\phi_k(x)
=
T_k\left(
\frac{\sin\bigl(p(2x-1)\bigr)}{\sin p}
\right),
\qquad k=0,\ldots,n-1,
$$

where

$$
p=2\tan^{-1}\left(\epsilon^{1/n}\right),
\qquad
\epsilon=\operatorname{eps}\approx 2.22\times10^{-16}.
$$

The two-dimensional approximation is then taken in tensor-product form,

$$
u_n(s,t)
=
\sum_{j,k=0}^{n-1}
a_{jk}\phi_j(s)\phi_k(t).
$$

Rather than using a square collocation system, we employ an
oversampled least-squares formulation. We take $m=2n$
equidistant points in each coordinate direction. Hence, the $n^2$
coefficients are determined from $m^2=4n^2$ equations in the least-squares sense.

Let
\[
D=z\Phi_T-\Phi,
\]
where
\[
\Phi_{jk}=\phi_{k-1}(x_j),
\qquad
(\Phi_T)_{jk}=\phi_{k-1}(T(x_j)).
\]
The full least-squares matrix has the tensor-product form
\[
A_{\rm LS}=D\otimes D.
\]
This matrix need not be formed explicitly. Instead, writing the
coefficients as an $n\times n$ matrix $C$, the least-squares problem
can be expressed as
\[
\min_C
\left\|
DCD^T-G
\right\|_F,
\]
where $G$ contains the values of $g$ on the $m\times m$ grid.
The problem can therefore be solved by two successive one-dimensional
least-squares operations. Moreover,
\begin{equation}
\kappa(A_{\rm LS})=\kappa(D)^2,
\label{2Dcond}
\end{equation}
where $\kappa(\cdot)$ denotes the spectral condition number.

It can be shown that the series \eqref{2Dexample} has the value
\begin{equation}
S=
-\frac{\log(1-e^i)}{e^i}.
\label{2Dexact}
\end{equation}
Numerically,
\begin{equation}
S=
0.9237472755256664
+
0.5431955295344024\,i.
\label{2Dreference}
\end{equation}

Table~\ref{tab:2DLS} presents the results obtained with the
least-squares Tal--Ezer approximation. Here
\[
E_n=|S_n-S|,
\qquad
S_n=u_n(0,0).
\]

\begin{table}[htbp]
\centering
\caption{Two-dimensional least-squares approximation of
\eqref{2Dexample}, using $m=2n$ points in each coordinate direction.}
\label{tab:2DLS}

\renewcommand{\arraystretch}{1.3}
\begin{tabular}{c c c c}
\hline
\rule{0pt}{3.0ex}
$n$ & $\operatorname{cond}(A_{\rm LS})$
    & $S_n$ & $E_n$ \\[1.2ex]
\hline

5
& $2.9140\times10^{1}$
& $0.919279773090+0.549241212797\,i$
& $7.517\times10^{-3}$ \\

10
& $1.8777\times10^{2}$
& $0.923672782519+0.543153332138\,i$
& $8.561\times10^{-5}$ \\

20
& $9.3031\times10^{2}$
& $0.923747266083+0.543195627069\,i$
& $9.799\times10^{-8}$ \\

40
& $2.7227\times10^{4}$
& $0.923747275523+0.543195529537\,i$
& $3.943\times10^{-12}$ \\

80
& $4.4898\times10^{6}$
& $0.923747275526+0.543195529534\,i$
& $3.568\times10^{-15}$ \\
\hline
\end{tabular}
\end{table}

The results demonstrate rapid convergence of the two-dimensional
least-squares approximation. Although the condition number of the tensor-product
least-squares system increases with $n$, the approximation of the
quantity $u(0,0)$ remains highly accurate.
We use the Tal--Ezer transformed Chebyshev basis because it provides a flexible polynomial-like approximation while improving the numerical behavior of the basis at high orders. In addition, instead of enforcing the functional equation at exactly as many points as there are unknown coefficients, we use an oversampled least-squares formulation. The additional equations significantly reduce the effect of the severe ill-conditioning observed in square tensor-product collocation and lead to much more stable and accurate approximations of the required value of $u$.

\subsection{$d$-dimensional series}\hfill

For \(d\)-dimensional series, we derive the corresponding functional equation below.

Let
\begin{equation}
S=\sum_{\ds {\mathbf{n}\in \mathbb{N}_0^d}} f(\mathbf{n})e^{\ds iq(\mathbf{n})},
\label{sumdfeq}
\end{equation}
where $\mathbf{n}=(n_1,n_2,\dots ,n_d)$ and we assume that the series is absolutely convergent.

We search for a $d$-dimensional sequence $\{p(\mathbf{n})\}$, $\mathbf{n}\in\mathbb{N}_0^d$, such that
\[ p(\mathbf n)e^{\ds iq(\mathbf n)} = (-1)^d \sum_{k_1=n_1}^{\infty}\cdots \sum_{k_d=n_d}^{\infty} f(\mathbf k)e^{\ds iq(\mathbf k)}. \] 
It then follows that 
\begin{equation}
\Delta_1\Delta_2\dots \Delta_d \left[ p(\mathbf{n})e^{\ds iq(\mathbf{n})} \right] =
f(\mathbf{n})e^{\ds iq(\mathbf{n})},\ \ \ \mathbf{n}\in \mathbb{N}_0^d.
\label{sumdpeq}
\end{equation}
Moreover, absolute convergence ensures the required boundary limits. Consequently, 
\[ S = (-1)^d p(\mathbf 0)e^{\ds iq(\mathbf 0)}. \] 

We look for an auxiliary function $u(\mathbf{s})$ where
$\mathbf{s}=(s_1,s_2,\dots ,s_d)$ in $[0,1]^d$, such that the $d$-dimensional sequence satisfies
$$
p(\mathbf{n})\equiv u(\frac{n_1}{n_1+1},\frac{n_2}{n_2+1},\dots, \frac{n_d}{n_d+1}).
$$

Using the expression
\begin{equation}
\Delta_1\Delta_2\cdots\Delta_d F(\mathbf n)
=
\sum_{\boldsymbol{\ds \varepsilon}\in\{0,1\}^d}
(-1)^{\ds d-|\boldsymbol{\varepsilon}|}
F(\mathbf n+\boldsymbol{\varepsilon}),
\label{Deltad}
\end{equation}
\[
\boldsymbol{\varepsilon}=(\varepsilon_1,\dots,\varepsilon_d),\ \ \
|\boldsymbol{\varepsilon}|
=
\varepsilon_1+\cdots+\varepsilon_d,
\]
and proceeding analogously to the two-dimensional case, we derive the following functional equation for $u(\mathbf{s})$:

\begin{equation}
\sum_{\boldsymbol{\ds \varepsilon}\in\{0,1\}^d}
(-1)^{\ds d-|\boldsymbol{\varepsilon}|}
u((\mathbf{1}-\boldsymbol{\varepsilon})\odot\mathbf{s}
+
\boldsymbol{\varepsilon}\odot T(\mathbf{s}))
\frac{Q((\mathbf{1}-\boldsymbol{\varepsilon})\odot B(\mathbf{s})
+
\boldsymbol{\varepsilon}\odot A(\mathbf{s}))}{Q(B(\mathbf{s}))}=f(B(\mathbf{s})), \ \mathbf{s}\in [0,1)^d.
\label{FEqud}
\end{equation}
The following notation and terms are used in \eqref{FEqud}:

$$\mathbf{1}=(1,1,\dots,1),\ \ T(\mathbf{s})=\left(\frac{1}{2-s_1},\frac{1}{2-s_2},\dots,\frac{1}{2-s_d}\right),\ \ Q(\mathbf{x})= e^{\ds iq(\mathbf{x})},$$
$$A(\mathbf{s})=\left(\frac{1}{1-s_1},\frac{1}{1-s_2},\dots,\frac{1}{1-s_d}\right),\ \ B(\mathbf{s})=\left(\frac{s_1}{1-s_1},\frac{s_2}{1-s_2},\dots,\frac{s_d}{1-s_d}\right),$$
and the symbol $\odot$ denotes the componentwise Hadamard product.

To explain \eqref{FEqud}, note that the $j$th argument of $u$ is chosen according to the value of $\varepsilon_j$: if $\varepsilon_j=0$, it is $s_j$, whereas if $\varepsilon_j=1$, it is $1/(2-s_j)$. This choice can be written compactly as

$$
(1-\varepsilon_j)s_j+\varepsilon_j\frac{1}{2-s_j}.
$$
Using the Hadamard product, this choice can be expressed compactly and simultaneously for all arguments of $u$. An analogous construction is used for the argument of $Q$.

Let $u(\mathbf{s})$ be a solution of \eqref{FEqud}. As above, we make the essential assumption that \eqref{FEqud} admits a sufficiently smooth, slowly varying solution, which may then be approximated by collocation. It follows that
$$ S=(-1)^{\ds d} u(\mathbf{0})e^{\ds iq(\mathbf{0})}.$$

As in the two-dimensional case, suppose that
\[
q(\mathbf n)=\sum_{j=1}^d\theta_jn_j,
\qquad e^{\ds i\theta_j}\ne1,\quad j=1,\ldots,d.
\]
If the transformed right-hand side
\(g(\mathbf s)\equiv f(B(\mathbf s))\in C^\infty([0,1]^d)\) vanishes on every face
\(s_j=1\), then successive applications of
Theorem~\ref{ThmSmoothSolution}, treating the remaining variables
as parameters, yield a unique smooth solution
\(u\in C^\infty([0,1]^d)\) that vanishes on each of these faces.

\subsection{A three-dimensional example}

We next consider the three-dimensional oscillatory series
\begin{equation}
S=
\sum_{\ds n_1,n_2,n_3=0}^{\ds \infty}
\frac{e^{\ds i(n_1+n_2+n_3)}}
     {(n_1+n_2+n_3+1)^3}.
\label{3Dexample}
\end{equation}
As in the two-dimensional example, this series is only conditionally
convergent. The multiple series is understood in the sense of rectangular
partial sums.

Here
\[
q(n_1,n_2,n_3)=n_1+n_2+n_3,
\qquad
f(n_1,n_2,n_3)
=(n_1+n_2+n_3+1)^{-3}.
\]
Introducing
\[
n_1=\frac{s}{1-s},\qquad
n_2=\frac{t}{1-t},\qquad
n_3=\frac{r}{1-r},
\]
we obtain
\[
n_1+n_2+n_3+1
=
\frac{1-st-sr-tr+2str}
     {(1-s)(1-t)(1-r)}.
\]
Consequently, the transformed right-hand side is
\begin{equation}
g(s,t,r)
=
\left[
\frac{(1-s)(1-t)(1-r)}
     {1-st-sr-tr+2str}
\right]^3 .
\label{3Drhs}
\end{equation}
The function $g$ is defined to be zero on the faces $s=1$, $t=1$,
and $r=1$, by continuity.

Let
\[
T(x)=\frac{1}{2-x},
\qquad z=e^{\ds i}.
\]
For the linear phase considered here, the three-dimensional functional
equation corresponding to \eqref{3Dexample} takes the form
\begin{align}
&z^3 u(T(s),T(t),T(r))
\nonumber\\
&\quad
-z^2\bigl[
u(T(s),T(t),r)
+u(T(s),t,T(r))
+u(s,T(t),T(r))
\bigr]
\nonumber\\
&\quad
+z\bigl[
u(T(s),t,r)
+u(s,T(t),r)
+u(s,t,T(r))
\bigr]
-u(s,t,r)
\nonumber\\
&\hspace{5cm}=g(s,t,r).
\label{3Dfunctional}
\end{align}
Equivalently, if
\[
(L_1u)(s,t,r)=z\,u(T(s),t,r)-u(s,t,r),
\]
with analogous definitions of $L_2$ and $L_3$, then
\[
L_1L_2L_3u=g.
\]
Since $d=3$ and $q(0,0,0)=0$, the required series value is
\begin{equation}
S=-u(0,0,0).
\label{3DSvalue}
\end{equation}

For the numerical approximation, we again employ the Tal--Ezer mapped Chebyshev basis introduced in Section~\ref{2Dnumerical}, now in three-dimensional tensor-product form:

$$
u_n(s,t,r)
=
\sum_{j,k,\ell=0}^{n-1}
a_{jk\ell}\,
\phi_j(s)\phi_k(t)\phi_\ell(r).
$$

Rather than using a square collocation system, we employ an
oversampled least-squares formulation.  We take
\[
m=2n
\]
equidistant points in each coordinate direction.  Hence, the
$n^3$ coefficients are determined from $m^3=8n^3$ equations in the
least-squares sense.

If
\[
D=z\Phi_T-\Phi,
\]
where
\[
\Phi_{jk}=\phi_{k-1}(x_j),
\qquad
(\Phi_T)_{jk}=\phi_{k-1}(T(x_j)),
\]
then the full least-squares matrix has the tensor-product form
\[
A_{\rm LS}=D\otimes D\otimes D.
\]
This matrix need not be formed explicitly.  The least-squares
problem can instead be solved by three successive one-dimensional
least-squares operations.  Moreover,
\begin{equation}
\kappa(A_{\rm LS})=\kappa(D)^3,
\label{3Dcond}
\end{equation}
where $\kappa(\cdot)$ denotes the spectral condition number.

It can be shown that the three-dimensional series \eqref{3Dexample} has the value
\begin{equation}
S=
\frac{
-\log(1-e^i)+\operatorname{Li}_2(e^i)
}
{2e^i},
\label{3Dexact}
\end{equation}
where $\operatorname{Li}_2$ denotes the dilogarithm function.

Numerically,
\begin{equation}
S
=
0.9760484167789722
+
0.4091437417344512\,i.
\label{3Dreference}
\end{equation}

Table~\ref{3Dtable} presents the results obtained with the
least-squares Tal–Ezer approximation.  Here
\[
E_n=|S_n-S|,
\qquad
S_n=-u_n(0,0,0).
\]

\renewcommand{\arraystretch}{1.35}

\begin{table}[htbp]
\centering
\caption{Three-dimensional least-squares approximation of
\eqref{3Dexample}, using $m=2n$ points in each coordinate direction.}
\label{3Dtable}
\begin{tabular}{c c c c}
\hline
\rule{0pt}{3.2ex}
$n$ & $\operatorname{cond}(A_{\rm LS})$
    & $S_n$ & $E_n$ \\[1.5ex]
\hline

 5  & $1.5730\times10^{2}$
    & $0.969680611861+0.403985346576\,i$
    & $8.195\times10^{-3}$ \\

10  & $2.5730\times10^{3}$
    & $0.976137352907+0.409054677512\,i$
    & $1.259\times10^{-4}$ \\
20  & $2.8375\times10^{4}$
    & $0.976048254303+0.409143563607\,i$
    & $2.411\times10^{-7}$ \\
40  & $4.4927\times10^{6}$
    & $0.976048416803+0.409143741738\,i$
    & $2.474\times10^{-11}$ 
    \\
80  &  $9.5134\times10^9$
    &
    $0.976048416779 +0.409143741734\,i$
    & $1.755\times10^{-15}$
    \\
\hline
\end{tabular}
\end{table}

The results demonstrate rapid convergence of the three-dimensional
least-squares approximation. Although the condition number of the
full tensor-product system increases with $n$, the approximation of the
single quantity $u(0,0,0)$ remains highly accurate.

\section{Further challenges and research directions}

The summation of one-dimensional series can be carried out very effectively using the \(d\)-transformation introduced in \citep{LevinSidi1981} and discussed in detail in Sidi's book \citep{Sidi2003}. We believe that the main potential of the series collocation method presented in this paper lies in its application to multidimensional series.

Further research is needed to broaden the class of series for which the collocation method is effective. A central theoretical question is to identify conditions on the functions \(f\) and \(q\) that ensure the existence of a smooth, non-oscillatory solution to the associated functional equation.

Another important challenge is to make the method applicable to series involving special functions, as done in \citep{Levin1996} and \citep{Olver2007} for the collocation method for highly oscillatory functions on finite intervals.

An important computational issue concerns the choice of basis functions and collocation points for solving the functional equations developed here. In this regard, the extensive experience accumulated with collocation methods for finite oscillatory integration can provide useful guidance. However, since the present problem involves a functional equation rather than a differential equation, alternative solution strategies may also be worth exploring.

As discussed in Section~\ref{Rational}, applying the collocation method to a power series yields rational approximations to the corresponding function. Unlike Padé approximants, which are determined by a finite initial segment of the coefficient sequence, the present construction exploits a functional representation of the coefficient sequence over its full range. These new approximations, in both one and several dimensions, may warrant further investigation, particularly concerning their approximation properties and the distribution of their singularities.

\bibliographystyle{plainnat}
\bibliography{references}

\end{document}